%% This is file `elsarticle-template-1a-num.tex',
%%
%% Copyright 2009 Elsevier Ltd
%%
%% This file is part of the 'Elsarticle Bundle'.
%% ---------------------------------------------
%%
%% It may be distributed under the conditions of the LaTeX Project Public
%% License, either version 1.2 of this license or (at your option) any
%% later version.  The latest version of this license is in
%%    http://www.latex-project.org/lppl.txt
%% and version 1.2 or later is part of all distributions of LaTeX
%% version 1999/12/01 or later.
%%
%% The list of all files belonging to the 'Elsarticle Bundle' is
%% given in the file `manifest.txt'.
%%
%% Template article for Elsevier's document class `elsarticle'
%% with numbered style bibliographic references
%%
%% $Id: elsarticle-template-1a-num.tex 151 2009-10-08 05:18:25Z rishi $
%% $URL: http://lenova.river-valley.com/svn/elsbst/trunk/elsarticle-template-1a-num.tex $
%%
\documentclass[preprint,12pt]{elsarticle}

%% Use the option review to obtain double line spacing
%% \documentclass[preprint,review,12pt]{elsarticle}

%% Use the options 1p,twocolumn; 3p; 3p,twocolumn; 5p; or 5p,twocolumn
%% for a journal layout:
%% \documentclass[final,1p,times]{elsarticle}
%% \documentclass[final,1p,times,twocolumn]{elsarticle}
%% \documentclass[final,3p,times]{elsarticle}
%% \documentclass[final,3p,times,twocolumn]{elsarticle}
%% \documentclass[final,5p,times]{elsarticle}
%% \documentclass[final,5p,times,twocolumn]{elsarticle}

%% if you use PostScript figures in your article
%% use the graphics package for simple commands
%% \usepackage{graphics}
%% or use the graphicx package for more complicated commands
%% \usepackage{graphicx}
%% or use the epsfig package if you prefer to use the old commands
%% \usepackage{epsfig}

%% The amssymb package provides various useful mathematical symbols
\usepackage{amssymb}
%% The amsthm package provides extended theorem environments
\usepackage{amsthm}

%% The lineno packages adds line numbers. Start line numbering with
%% \begin{linenumbers}, end it with \end{linenumbers}. Or switch it on
%% for the whole article with \linenumbers after \end{frontmatter}.
%% \usepackage{lineno}

%% natbib.sty is loaded by default. However, natbib options can be
%% provided with \biboptions{...} command. Following options are
%% valid:

%%   round  -  round parentheses are used (default)
%%   square -  square brackets are used   [option]
%%   curly  -  curly braces are used      {option}
%%   angle  -  angle brackets are used    <option>
%%   semicolon  -  multiple citations separated by semi-colon
%%   colon  - same as semicolon, an earlier confusion
%%   comma  -  separated by comma
%%   numbers-  selects numerical citations
%%   super  -  numerical citations as superscripts
%%   sort   -  sorts multiple citations according to order in ref. list
%%   sort&compress   -  like sort, but also compresses numerical citations
%%   compress - compresses without sorting
%%
%% \biboptions{comma,round}

% \biboptions{}

\newcommand{\sysn}{\left\{\begin{array}{rcl}}
\newcommand{\sysk}{\end{array}\right.}

\newtheorem{theorem}{Theorem}[section]

\theoremstyle{example}
\newtheorem{example}[theorem]{Example}

\newtheorem{proposition}[theorem]{Proposition}
\theoremstyle{definition}
\newtheorem{definition}[theorem]{Definition}
%\theoremstyle{remark}

%%\numberwithin{equation}{section}
\newtheorem{corollary}[theorem]{Corollary}

\journal{Topology and its Applications}

\begin{document}

\begin{frontmatter}

%% Title, authors and addresses

%% use the tnoteref command within \title for footnotes;
%% use the tnotetext command for the associated footnote;
%% use the fnref command within \author or \address for footnotes;
%% use the fntext command for the associated footnote;
%% use the corref command within \author for corresponding author footnotes;
%% use the cortext command for the associated footnote;
%% use the ead command for the email address,
%% and the form \ead[url] for the home page:
%%
%%\title{Topological-Algebraic Properties of Function Space with Set-Open Topology\tnoteref{label1}}
%%\tnotetext[label1]{}
%%\author{Alexander V. Osipov\corref{cor1}\fnref{label2}}
%%\ead{OAB@list.ru}
%% \ead[url]{home page}
%% \fntext[label2]{}
%% \cortext[cor1]{}
%% \address{Address\fnref{label3}}
%% \fntext[label3]{}

\title{On extension functions for image space with different separation axioms  \tnoteref{label1}}

%% use optional labels to link authors explicitly to addresses:
%% \author[label1,label2]{<author name>}
%% \address[label1]{<address>}
%% \address[label2]{<address>}

\author{Alexander V. Osipov}

\ead{OAB@list.ru}

\tnotetext[label1]{The research has been supported by Act 211
Government of the Russian Federation, contract ¹ 02.A03.21.0006.}

\address{Institute of Mathematics and Mechanics, Ural Branch of the Russian Academy of Sciences, 16, S.Kovalevskaja street,
620219, Ekaterinburg, Russia     \\  Ural Federal University}

\begin{abstract}
%% Text of abstract
In this paper we study a sufficient conditions for continuous and
$\theta_{\alpha}$-continuous extensions of $f$ to space $X$ for an
image space $Y$ with different separation axioms.
\end{abstract}

\begin{keyword}
%% keywords here, in the form: keyword \sep keyword
$S(n)$-space \sep  regular space  \sep continuous function  \sep
$\theta_{\alpha}$-continuous function \sep $U(\alpha)$-space \sep
regular $U(\alpha)$-space

%% MSC codes here, in the form: \MSC code \sep code

\MSC 54C40 \sep 54C35 \sep 54D60 \sep 54H11 \sep 46E10
%% or \MSC[2008] code \sep code (2000 is the default)

\end{keyword}

\end{frontmatter}

%%
%% Start line numbering here if you want
%%
% \linenumbers

%% main text

\section{Introduction}
\label{}

It is work was performed as part of the general problem, which is
as follows. Let $f$ be a continuous mapping of a dense set $S$ of
the topological space $X$ into the topological space $Y$. Required
to find the necessary and sufficient conditions for continuous
extension of $f$ to the space $X$. (i.e. existence a continuous
mapping $F:X \mapsto Y$ such that $F\upharpoonright S=f$). This
problem can be considered more widely, if the continuous mapping
is replaced be "almost" continuous. For example, we will consider
the $\theta_{\alpha}$-continuous mapping.

First sufficient condition for continuous extension of $f$ to the
space $X$ into the regular space $Y$ was obtained by N. Bourbaki.
In {\cite{bur}} was proved that this condition is not sufficient
condition for no regular space $Y$.

The necessary and sufficient conditions for continuous extension
of $f$ on the space $X$ was obtained:
\medskip

in ~{\cite{vul}}  for metrizable compact spaces $Y$;
\medskip

in ~{\cite{ta}} for compact spaces $Y$;
\medskip

in ~{\cite{vel}} for Lindel$\ddot{e}$of spaces $Y$;
\medskip

in ~{\cite{eng}} for realcompact spaces $Y$;
\medskip

in ~{\cite{vel}} for regular spaces $Y$.

\medskip

So for a compact spaces $Y$ we have the next result (see
~{\cite{ta}}).

\begin{theorem}(Taimanov)\label{th0} Let $f$ be a continuous mapping of a dense set $S$ of a
topological space $X$ into a compact space $Y$, then  the
following are equivalent:

\begin{enumerate}

\item $f$ to have a continuous extension to $X$;

\item  if $A$ and $B$ are disjoint closed subsets of $Y$ then
$\overline{f^{-1}(A)}\bigcap \overline{f^{-1}(B)}=\emptyset$.

\end{enumerate}
\end{theorem}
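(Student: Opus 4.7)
The plan is to prove the two implications separately. For $(1) \Rightarrow (2)$, I would use that if $F : X \to Y$ is the claimed extension then $f^{-1}(A) = F^{-1}(A) \cap S$ lies in the closed set $F^{-1}(A)$, so $\overline{f^{-1}(A)} \subseteq F^{-1}(A)$, and similarly for $B$; disjointness of $A$ and $B$ transfers to disjointness of $\overline{f^{-1}(A)}$ and $\overline{f^{-1}(B)}$. This direction is essentially automatic.

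For the substantive direction $(2) \Rightarrow (1)$, I would build $F$ pointwise. For each $x \in X$ let $\mathcal{N}_x$ denote its family of open neighborhoods in $X$ and set $K_x = \bigcap_{U \in \mathcal{N}_x} \overline{f(U \cap S)}$. Since $\mathcal{N}_x$ is closed under finite intersection and $S$ is dense, the family $\{\overline{f(U \cap S)}\}_{U \in \mathcal{N}_x}$ has the finite intersection property, so compactness of $Y$ forces $K_x \neq \emptyset$.

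The first key step is to use hypothesis (2) to show $|K_x| = 1$. If there were distinct $y_1, y_2 \in K_x$, regularity of the compact Hausdorff space $Y$ would let me pick disjoint closed sets $A, B$ with $y_i$ in the interior of the corresponding one. Because each $y_i$ lies in $\overline{f(U \cap S)}$ for every $U$, the open interiors would meet $f(U \cap S)$, forcing $x \in \overline{f^{-1}(A)} \cap \overline{f^{-1}(B)}$ and contradicting (2). So I can define $F(x)$ to be the unique element of $K_x$; agreement with $f$ on $S$ is immediate, since each $x \in S$ lies in $U \cap S$ for every $U \in \mathcal{N}_x$.

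The main remaining task, which I expect to be the subtlest part, is continuity of $F$. Fix $x \in X$ and an open $V$ containing $F(x)$. Applying regularity of $Y$ twice, I would choose open sets $V_1, V_0$ with $F(x) \in V_1 \subseteq \overline{V_1} \subseteq V_0 \subseteq \overline{V_0} \subseteq V$, and set $A = Y \setminus V_0$ and $B = \overline{V_1}$, which are disjoint closed sets. A direct argument mirroring the uniqueness step gives $x \in \overline{f^{-1}(V_1)} \subseteq \overline{f^{-1}(B)}$, so (2) yields $x \notin \overline{f^{-1}(A)}$, producing an open $U \ni x$ with $f(U \cap S) \cap A = \emptyset$, i.e.\ $f(U \cap S) \subseteq V_0$. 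Then for any $x' \in U$, the set $U$ is an open neighborhood of $x'$, so $F(x') \in \overline{f(U \cap S)} \subseteq \overline{V_0} \subseteq V$, giving $F(U) \subseteq V$. The double nesting is the delicate point: with only a single application of regularity one would land in $\overline{V}$ rather than inside $V$.
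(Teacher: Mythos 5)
Your proof is correct. Note that the paper itself does not prove this statement -- it is quoted as Taimanov's classical theorem with a citation -- so there is no in-paper argument to compare against; but your construction $F(x)$ as the unique point of $K_x=\bigcap\{\overline{f(U\cap S)}:U\in\mathcal{N}(x)\}$, with condition (2) forcing uniqueness and a nested-regularity argument giving continuity, is exactly the scheme the paper later generalizes (Proposition 3.3 and the theorem following it, where the ordinary closure is replaced by the $\theta^{\alpha}$-closure and disjoint closed sets by disjoint closures of $\alpha$-hulls). One small caveat: your argument uses Hausdorffness of $Y$ (to separate two points of $K_x$ by disjoint closed neighborhoods and to apply regularity), which is indeed how ``compact'' is meant here (bikompakt in the Russian tradition); for a merely compact non-Hausdorff $Y$ the uniqueness step, and the theorem itself, would fail, so it is worth stating that assumption explicitly.
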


Consider the following

$\bullet $ Condition $(*)$: if a family $\{A_{\beta}\}$ of closed
subsets of $Y$ such that $\bigcap_{\beta} A_{\beta}=\emptyset$
implies $\bigcap_{\beta} \overline{f^{-1}(A_{\beta})}=\emptyset$.

\medskip

So general result for a regular space $Y$ is the following theorem
(\cite{vel}).

\begin{theorem}(Velichko)\label{th1} Let $f$ be a continuous mapping of a dense set $S$ of a
topological space $X$ into a regular space $Y$, then  the
following are equivalent:

\begin{enumerate}

\item $f$ to have a continuous extension to $X$;

\item  condition $(*)$ holds.

\end{enumerate}
\end{theorem}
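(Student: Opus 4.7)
The plan is to follow the Taimanov template (Theorem~\ref{th0}) but to replace the appeal to compactness by an appeal to condition $(*)$. The direction $(1)\Rightarrow(2)$ is the quick one and does not use regularity at all: if $F\colon X\to Y$ is a continuous extension of $f$ and $\bigcap_{\beta}A_{\beta}=\emptyset$, then for any $x\in X$ there is some $\beta_0$ with $F(x)\notin A_{\beta_0}$; by continuity of $F$ the set $F^{-1}(A_{\beta_0})$ is closed in $X$ and contains $f^{-1}(A_{\beta_0})=S\cap F^{-1}(A_{\beta_0})$, so $\overline{f^{-1}(A_{\beta_0})}\subseteq F^{-1}(A_{\beta_0})$ misses $x$.

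For the reverse direction $(2)\Rightarrow(1)$, I would construct the extension pointwise by setting $F(x)$ to be a point of
$\bigcap\{\overline{f(U\cap S)}:x\in U,\ U\text{ open in }X\}$,
and then verify four things: the intersection is nonempty, it consists of exactly one point, the restriction $F\upharpoonright S$ coincides with $f$, and $F$ is continuous on $X$. Writing $A_U=\overline{f(U\cap S)}$, the inclusion $f^{-1}(A_U)\supseteq U\cap S$ combined with the density of $S$ in $X$ gives $\overline{f^{-1}(A_U)}\supseteq\overline{U\cap S}\supseteq U\ni x$; thus $x$ lies in every $\overline{f^{-1}(A_U)}$, and the contrapositive of $(*)$ forces $\bigcap_U A_U\neq\emptyset$, which gives nonemptiness. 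Uniqueness is the first place regularity is needed: if two distinct points $y_1,y_2$ belonged to the intersection, regularity of $Y$ would produce disjoint open $W_1\ni y_1$, $W_2\ni y_2$ with $\overline{W_1}\cap\overline{W_2}=\emptyset$, and the same density argument as above would show $x\in\overline{f^{-1}(\overline{W_1})}\cap\overline{f^{-1}(\overline{W_2})}$, directly contradicting $(*)$. The identity $F\upharpoonright S=f$ is immediate, since for $x\in S$ the point $f(x)$ itself belongs to each $\overline{f(U\cap S)}$.

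The main obstacle is continuity of $F$, and it is here that regularity and $(*)$ must be fused. Given $x\in X$ and an open $V\ni F(x)$, regularity supplies an open $W$ with $F(x)\in W\subseteq \overline{W}\subseteq V$; put $A=Y\setminus W$, a closed set not containing $F(x)$. I would then apply $(*)$ to the enlarged closed family $\{A\}\cup\{\overline{f(U\cap S)}:x\in U\text{ open}\}$, whose total intersection equals $\{F(x)\}\cap A=\emptyset$ by the uniqueness step. Since the preimage of each $\overline{f(U\cap S)}$ contains $U\cap S$, its closure contains $x$, so $(*)$ forces $x\notin\overline{f^{-1}(A)}$; hence there exists an open $U\ni x$ with $f(U\cap S)\subseteq W$, and consequently $\overline{f(U\cap S)}\subseteq\overline{W}\subseteq V$. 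Because $F(x')\in\overline{f(U\cap S)}$ for every $x'\in U$ (as $U$ is then also a neighborhood of $x'$), one concludes $F(U)\subseteq V$, which is the desired continuity. The delicate point in this step is the choice to insert the closed set $A$ into the family already controlling $F(x)$; this is what converts the pointwise description of $F$ into a local neighborhood condition.
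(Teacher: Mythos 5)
Your argument is correct, but it is worth noting that the paper never proves this statement directly: Velichko's theorem is cited from \cite{vel} and is recovered only as the $\alpha=1$ instance of the general machinery (Proposition \ref{pr1}, the $U(\alpha)$ extension theorem with condition $(*_{\alpha})$, Proposition \ref{pr2}, and Theorem \ref{th15}), using $\theta^{\alpha}$-closures, $\alpha$-hulls and the set of $X_{\theta^{\alpha}}$-interior points. You instead give a direct, self-contained proof in the classical Taimanov--Velichko style: the necessity direction is the same as Proposition \ref{pr11} (you use closedness of $F^{-1}(A_{\beta})$ rather than a neighborhood argument, which is equivalent), and for sufficiency you define $F(x)$ as the unique point of $\bigcap\{\overline{f(U\cap S)}: U\in\mathcal{N}(x)\}$ and prove continuity by adjoining the closed set $A=Y\setminus W$ (with $\overline{W}\subseteq V$) to the trace family and applying $(*)$ once more. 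This is essentially what Proposition \ref{pr2} does in the general setting with $\alpha$-hulls, but your route bypasses the auxiliary condition $(*_{\alpha})$ and the $\theta^{\alpha}$-apparatus entirely, which makes the regular case shorter and more transparent; what it does not buy is the paper's extra generality ($U(\alpha)$- and regular-$U(\alpha)$-spaces, e.g.\ functionally Hausdorff and Tychonoff images via $\alpha=\omega$). Two small points you should tighten: in the uniqueness step, ``the same density argument'' is not literally the one used for nonemptiness --- you need that $y_i\in\overline{f(U\cap S)}$ forces $U\cap f^{-1}(W_i)\neq\emptyset$ for every $U\in\mathcal{N}(x)$, whence $x\in\overline{f^{-1}(\overline{W_i})}$; and separating two \emph{points} by open sets with disjoint closures uses that ``regular'' is taken to include $T_1$ (equivalently, that a regular space is a $U(1)$-space, as the paper assumes), otherwise uniqueness of $F(x)$ can fail.
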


Note that if $Y$ is a Tychonoff space, then we can in condition
$(*)$ a closed subsets be replaced by zero-sets of $Y$.

\begin{theorem}(Velichko)\label{th1} Let $f$ be a continuous mapping of a dense set $S$ of the
topological space $X$ into a Lindel$\ddot{e}$of space $Y$, then
the following are equivalent:

\begin{enumerate}

\item $f$ to have a continuous extension to $X$;

\item  for any sequence $\{A_{i}\}$ of zero-sets of $Y$ such that
$\bigcap_{i} A_{i}=\emptyset$ implies $\bigcap_{i}
\overline{f^{-1}(A_{i})}=\emptyset$.

\end{enumerate}
\end{theorem}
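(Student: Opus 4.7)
The plan is to prove both directions, with $(1) \Rightarrow (2)$ being routine and $(2) \Rightarrow (1)$ being reduced to Theorem \ref{th1} (the Velichko theorem for regular $Y$).

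For $(1) \Rightarrow (2)$, suppose $F : X \to Y$ is a continuous extension of $f$. Given a sequence $\{A_i\}$ of zero-sets of $Y$ with $\bigcap_i A_i = \emptyset$, each $F^{-1}(A_i)$ is closed in $X$ and satisfies $F^{-1}(A_i) \cap S = f^{-1}(A_i)$, so $\overline{f^{-1}(A_i)} \subseteq F^{-1}(A_i)$. Hence $\bigcap_i \overline{f^{-1}(A_i)} \subseteq F^{-1}\bigl(\bigcap_i A_i\bigr) = \emptyset$.

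For $(2) \Rightarrow (1)$, I plan to verify condition $(*)$ for arbitrary families of closed subsets of $Y$ and then invoke Velichko's theorem for regular $Y$ (recall that a regular Lindel\"of space is normal, hence Tychonoff, so zero-sets and cozero-sets behave well). Let $\{B_\beta\}$ be a family of closed subsets of $Y$ with $\bigcap_\beta B_\beta = \emptyset$. Then $\{Y \setminus B_\beta\}$ is an open cover of $Y$. Since $Y$ is Tychonoff, every open set is a union of cozero-sets, so I refine this cover to an open cover of $Y$ by cozero-sets each contained in some $Y \setminus B_\beta$. Applying the Lindel\"of property, I extract a countable subcover $\{Y \setminus Z_i\}_{i\in \omega}$ where each $Z_i$ is a zero-set and $B_{\beta_i} \subseteq Z_i$ for some index $\beta_i$. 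Then $\bigcap_i Z_i = \emptyset$, so by hypothesis $\bigcap_i \overline{f^{-1}(Z_i)} = \emptyset$. Since $\overline{f^{-1}(B_{\beta_i})} \subseteq \overline{f^{-1}(Z_i)}$, we conclude
\[
\bigcap_\beta \overline{f^{-1}(B_\beta)} \subseteq \bigcap_i \overline{f^{-1}(B_{\beta_i})} \subseteq \bigcap_i \overline{f^{-1}(Z_i)} = \emptyset,
\]
so condition $(*)$ holds, and Theorem \ref{th1} yields the required continuous extension.

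The main obstacle is the reduction step: I must pass from an arbitrary family of closed sets with empty intersection to a countable family of zero-sets with empty intersection whose preimages under $f$ \emph{dominate} the preimages of the original sets (so that closures and intersections go the right way). The careful point is arranging the cozero refinement so that each chosen $Z_i$ contains some $B_{\beta_i}$; this is what links the new countable sequence to the given arbitrary family, and it is exactly where the Tychonoff/regularity of $Y$ and the Lindel\"of property combine.
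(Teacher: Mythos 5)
The paper itself gives no proof of this theorem: it is quoted from Velichko's paper and stated immediately after the regular-space version, so there is no argument of the author's to compare yours against. Your proof is correct and is the natural derivation: the necessity direction is the same routine computation as Proposition 1.4 of the paper (closures of $f^{-1}(A_i)$ sit inside the closed sets $F^{-1}(A_i)$), and your sufficiency direction correctly reduces an arbitrary family of closed sets $\{B_\beta\}$ with empty intersection to a countable family of zero-sets: refine the cover $\{Y\setminus B_\beta\}$ by cozero-sets, take a countable subcover by the Lindel\"of property, pass to the complementary zero-sets $Z_i\supseteq B_{\beta_i}$ with $\bigcap_i Z_i=\emptyset$, apply the hypothesis, and use $\overline{f^{-1}(B_{\beta_i})}\subseteq\overline{f^{-1}(Z_i)}$; the inclusions all go the right way, and condition $(*)$ plus the regular-space theorem then yields the extension. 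The one point to flag is that your argument needs $Y$ to be completely regular (you use that every open set is a union of cozero-sets, and you invoke the regular-case theorem), whereas the statement as printed says only ``Lindel\"of.'' That extra hypothesis is surely intended --- the paper's remark just before the theorem, that for Tychonoff $Y$ one may replace closed sets by zero-sets in $(*)$, only makes sense in that setting, and without some such axiom the zero-set condition can be vacuous while no extension exists --- but you should state explicitly that you are reading ``Lindel\"of'' as ``regular (hence Tychonoff) Lindel\"of,'' rather than leaving it as an aside.
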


Note that the condition $(*)$ is a necessary condition for
continuous extension of $f$ to $X$ for any space $Y$.

\begin{proposition}\label{pr11} Let $f$ be have a continuous extension to $X$ for
a space $Y$. Then condition $(*)$ holds.
\end{proposition}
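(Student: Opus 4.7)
The plan is to argue directly from the definition of continuous extension, with no real obstacle — this is essentially a one-step observation. Let $F:X\to Y$ denote the continuous extension of $f$, so $F\upharpoonright S=f$.

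First, I would record the key inclusion: for every closed set $A\subseteq Y$,
\[
\overline{f^{-1}(A)}\subseteq F^{-1}(A).
\]
The point is that $f^{-1}(A)=\{s\in S:F(s)\in A\}\subseteq F^{-1}(A)$, and since $F$ is continuous and $A$ is closed, $F^{-1}(A)$ is a closed subset of $X$ containing $f^{-1}(A)$; hence it contains the closure.

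Next, I would apply this inclusion to each member of the family $\{A_\beta\}$ and intersect. This yields
\[
\bigcap_{\beta}\overline{f^{-1}(A_\beta)}\subseteq \bigcap_{\beta}F^{-1}(A_\beta)=F^{-1}\!\left(\bigcap_{\beta}A_\beta\right)=F^{-1}(\emptyset)=\emptyset,
\]
which is precisely condition $(*)$. The only thing to be careful about is the trivial verification that preimage commutes with arbitrary intersections, but this is standard. There is no genuine obstacle: the proof uses nothing about $Y$ beyond that preimages of closed sets under a continuous map are closed.
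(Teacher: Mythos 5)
Your proof is correct and amounts to the same argument as the paper's: the inclusion $\overline{f^{-1}(A_\beta)}\subseteq F^{-1}(A_\beta)$ you isolate is exactly what the paper proves pointwise by choosing, for each $x$, an index $\beta$ with $F(x)\notin A_\beta$ and using the open neighborhood $F^{-1}(Y\setminus A_\beta)$. Your global phrasing via closed preimages and commuting $F^{-1}$ with the intersection is just a cleaner packaging of the same idea.
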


\begin{proof} Let $F$ be a continuous extension to $X$ for a
a space $Y$ and  $\{A_{\beta}\}$ be a family of closed subsets of
$Y$ such that $\bigcap_{\beta} A_{\beta}=\emptyset$. Fix $x\in X$.
There is $\beta$ such that $F(x)\notin A_{\beta}$, hence there
exist a neighborhood $V$ of $F(x)$ such that $V\bigcap
A_{\beta}=\emptyset$. Since $F$ is continuous map, $F^{-1}(V)$ is
a neighborhood of $x$. It follows that $F^{-1}(V)\bigcap
F^{-1}(A_{\beta})=\emptyset$ and $x\notin
\overline{f^{-1}(A_{\beta})}$.
\end{proof}

In this paper we study a sufficient conditions for continuous and
$\theta_{\alpha}$-continuous extensions of $f$ to $X$ for an image
space $Y$ with different separation axioms.

 P.S. Alexandroff and P.S. Urysohn ~{\cite{au}} first defined $R$-closed spaces in
 1924.
\medskip
$U$-closed ($R$-closed) spaces are Urysohn (regular) spaces,
closed in any Urysohn (regular) space containing them. Recall that
the solutions to the standard extension of continuous functions
problem in the setting of $R$-closed or $U$-closed spaces are
unknown.

\medskip

${\bf Question}$ 1 (Q.22 in ~{\cite{fgpp}}). Let $X$ be an
$R$-closed extension of a space $S$ and $f:S\mapsto Y$ be a
continuous function where $Y$ is $R$-closed. Find a necessary and
sufficient condition for $f$ to have a continuous extension to
$X$.

\medskip

${\bf Question}$ 2 (Q.23 in ~{\cite{fgpp}}). Let $X$ be an
$U$-closed extension of a space $S$ and $f:S\mapsto Y$ be a
continuous function where $Y$ is $U$-closed. Find a necessary and
sufficient condition for $f$ to have a continuous extension to
$X$.

\medskip

In this paper we find a necessary and sufficient condition for $f$
to have a continuous extension to $X$ where $Y$ is $U(\alpha)$-
space (regular $U(\alpha)$-space).

\section{Main definitions and notation}

We say that $U$ is a neighborhood of a set $A$ if $U$ is  an open
set in $X$ such that $A\subseteq U$.

 The closure of a set $A$ will be denoted by $\overline{A}$ , $[A]$ or $cl(A)$; the symbol
 $\varnothing$ stands for the empty set. As usual, $f(A)$ and $f^{-1}(A)$ are the image and
 the complete preimage of the set $A$ under the mapping~$f$,
 respectively.

  Let $\alpha>0$ be an ordinal.

\begin{definition} A neighborhood $U$ of a set $A$ is called an $\alpha$-hull of
the set $A$ if there exists a set of neighborhoods
$\{U_{\beta}\}_{\beta\leq\alpha}$ of the set $A$ such that
$clU_{\beta}\subseteq U_{\beta+1}$ for any  $\beta+1\leq\alpha$
and $U=U_{\alpha}=\bigcup_{\beta\leq\alpha} U_{\beta}$.
\end{definition}

For $\alpha=1$,  a $1$-hull of the set $A$ is an open set
containing the set $A$.

Let $X$ be a topological space, $M\subseteq X$, $x\in X$ and
$\alpha>0$ be an ordinal. We consider the
$\theta^{\alpha}$-closure operator: $x\notin cl_{\theta^{\alpha}}
M$ if there is an $\alpha$-hull $U$ of the point $x$ such that
$clU\bigcap M=\emptyset$ if $\alpha>1$; $cl_{\theta^0}M=clM$ if
$\alpha=0$; and, for $\alpha=1$, we get the $\theta$-closure
operator, i.e., $cl_{\theta^1}M=cl_{\theta}M$.

 A set $M$ is
$\theta^{\alpha}$-closed if $M=cl_{\theta^{\alpha}}M$. Denote by
$Int_{\theta^{\alpha}}M=X\setminus cl_{\theta^{\alpha}}(X\setminus
M)$ the $\theta^{\alpha}$-interior of the set $M$. Evidently,
$cl_{\theta^{\alpha_1}}(cl_{\theta^{\alpha_2}}M)=cl_{\theta^{\alpha_1+\alpha_2}}M$
for $M\subseteq X$ and any $\alpha_1$, $\alpha_2$ ordinal numbers.

For $\alpha>0$ and a filter $\mathcal F$ on $X$, denote by
$ad_{\theta^{\alpha}}\mathcal F$ the set of
$\theta^{\alpha}$-adherent points, i.e.,
$ad_{\theta^{\alpha}}\mathcal F=\bigcap \{
cl_{\theta^{\alpha}}\mathcal F_{\beta} : F_{\beta}\in \mathcal
F\}$. In particular, $ad_{\theta^0}\mathcal F=ad\mathcal F$ is the
set of adherent points of the filter $\mathcal F$. For any
$\alpha$, a point $x\in X$ is $S(\alpha)$-separated from a subset
$M$ if $x\notin cl_{\theta^{\alpha}}M$. For example, $x$ is
$S(0)$-separated from $M$ if $x\notin clM$. For $\alpha>0$, the
relation of $S(\alpha)$-separability of points is symmetric. On
the other hand, $S(0)$-separability may be not symmetric in some
not $T_1$-spaces. Therefore, we say that points $x$ and $y$ are
$S(0)$-separated if $x\notin cl_{X}\{y\}$ and $y\notin
cl_{X}\{x\}$.

Let $n\in \mathbb{N}$ and $X$ be a topological space.

1. $X$ is called an $S(n)$-space if any two distinct points of $X$
are $S(n)$-separated.

Next, we define a series of separation axioms for $\alpha>0$.

2. $X$ will be called an $U(\alpha)$-space if any two distinct
points $x$ and $y$ of $X$ there are $U_{x}$, $U_{y}$ an
$\alpha$-hull of $x$ and $y$ such that $\overline{U_{x}}\bigcap
\overline{U_{y}}=\emptyset$.

\medskip
Note that a regular space is a $S(n)$-($U(n)$-)space for any $n\in
\mathbb{N}$, and a functionally Hausdorff space is a
$U(\omega)$-space.

3. $X$ will be called a regular-$U(\alpha)$-space if  for a point
$x$ and a closed  set $F$ such that $x\notin F$ there are $U_{x}$,
$U_{F}$ an $\alpha$-hull of $x$ and $F$ such that
$\overline{U_{x}}\bigcap \overline{U_{F}}=\emptyset$.

Note that a Tychonoff space is a regular-$U(\omega)$-space.

It is obvious that $S(0)$-spaces are $T_1$-spaces, $S(1)$-spaces
are Hausdorff spaces, and $S(2)$-spaces are Urysohn spaces.

\medskip
A set of all of neighborhoods of $x$ will be denoted by
$\mathcal{N}(x)$.

\medskip

A set of all $\alpha$-hull of the set $A$ will be denoted by
$\mathcal{N}_{\theta^{\alpha}}(A)$.

\medskip

\section{Continuous extension}

\begin{definition} Let $X, Y$ be a topological spaces, $S$ be a dense subset of $X$, $f$ be a  continuous map from $S$ into
$Y$ and $V$ be a subset of $Y$. A point $x\in X$ will be called
{\it $X_{\theta^{\alpha}}$-interior point} of $f^{-1}(V)$, if

$\bigcap \{[f(P\bigcap S)]_{\theta^{\alpha}} : P\in
\mathcal{N}(x)\}\subseteq V$ holds.

\end{definition}

Set of all $X_{\theta^{\alpha}}$-interior points of $f^{-1}(V)$
will be denoted by $X_{\theta^{\alpha}}(f^{-1}(V))$.

\bigskip

\begin{proposition}\label{pr1}
 Let $X$ be a topological space, $Y$ be a $U(\alpha)$-space, $S$ be a dense subset of $X$, $f$ be a  continuous map from $S$ into
$Y$, $V$ be a open subset of $Y$, $x\in X$ and the condition $(*)$
holds. Then the set $\bigcap \{[f(P\bigcap S)]_{\theta^{\alpha}} :
P\in \mathcal{N}(x)\}=\{p\}$ for some $p\in Y$.
\end{proposition}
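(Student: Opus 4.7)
My plan is to establish non-emptiness and uniqueness of the intersection
$$I(x):=\bigcap\{[f(P\cap S)]_{\theta^{\alpha}}:P\in\mathcal{N}(x)\}$$
separately, each time by feeding an appropriate family of closed subsets of $Y$ into condition $(*)$. The hypothesis that $V$ is open is not used here; it is presumably present because the proposition is intended to be combined with later results.

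First I would tackle non-emptiness. Each $[f(P\cap S)]_{\theta^{\alpha}}$ is a closed subset of $Y$, so $(*)$ applies to this family. To conclude $I(x)\ne\emptyset$ via the contrapositive of $(*)$, it suffices to verify that $x\in\overline{f^{-1}([f(P\cap S)]_{\theta^{\alpha}})}$ for every $P\in\mathcal{N}(x)$. This is routine: from $f(P\cap S)\subseteq[f(P\cap S)]_{\theta^{\alpha}}$ one obtains $P\cap S\subseteq f^{-1}([f(P\cap S)]_{\theta^{\alpha}})$, and density of $S$ in $X$ together with the fact that $P$ is a neighborhood of $x$ gives $x\in\overline{P\cap S}\subseteq\overline{f^{-1}([f(P\cap S)]_{\theta^{\alpha}})}$.

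For uniqueness, I would argue by contradiction. Suppose $p,q\in I(x)$ with $p\ne q$. Since $Y$ is a $U(\alpha)$-space, pick once and for all $\alpha$-hulls $U_p\in\mathcal{N}_{\theta^{\alpha}}(p)$ and $U_q\in\mathcal{N}_{\theta^{\alpha}}(q)$ with $\overline{U_p}\cap\overline{U_q}=\emptyset$. For every $P\in\mathcal{N}(x)$, the membership $p\in[f(P\cap S)]_{\theta^{\alpha}}$ combined with the definition of the $\theta^{\alpha}$-closure forces $\overline{U_p}\cap f(P\cap S)\ne\emptyset$, and hence $P\cap f^{-1}(\overline{U_p})\ne\emptyset$; this shows $x\in\overline{f^{-1}(\overline{U_p})}$, and symmetrically $x\in\overline{f^{-1}(\overline{U_q})}$. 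Applying $(*)$ to the disjoint pair $\{\overline{U_p},\overline{U_q}\}$ of closed subsets of $Y$ yields $\overline{f^{-1}(\overline{U_p})}\cap\overline{f^{-1}(\overline{U_q})}=\emptyset$, a contradiction.

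The one point requiring care is the order of quantifiers in the uniqueness step: the separating hulls $U_p,U_q$ must be fixed in advance using the $U(\alpha)$-separation of $p$ from $q$, and only then may one invoke the definition of $\theta^{\alpha}$-closure to produce, inside each $P$, a point whose $f$-image lies in $\overline{U_p}$ (resp.\ $\overline{U_q}$). After that, everything reduces to two direct applications of $(*)$, so I do not expect any further obstacle.
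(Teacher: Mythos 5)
Your argument is correct and is essentially the paper's own proof: non-emptiness comes from condition $(*)$ together with the observation that $x\in\overline{P\cap S}$ for every $P\in\mathcal{N}(x)$, and uniqueness comes from applying $(*)$ to the disjoint closed sets $\overline{U_p}$, $\overline{U_q}$ supplied by the $U(\alpha)$-axiom, using that $p,q\in[f(P\cap S)]_{\theta^{\alpha}}$ forces $\overline{U_p}$ and $\overline{U_q}$ to meet $f(P\cap S)$ for every $P$ (the paper phrases this as ``no $z\neq y$ lies in the intersection'' rather than as a contradiction, but the content is identical). The one place you diverge is the non-emptiness step: you feed the family $\{[f(P\cap S)]_{\theta^{\alpha}}\}$ itself into $(*)$, which presupposes that $\theta^{\alpha}$-closures are closed subsets of $Y$. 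That claim is true, but it is not immediate from the paper's definition of an $\alpha$-hull when $\alpha$ is infinite (the witnessing family may ``restart'' at limit stages, so showing that the complement of $cl_{\theta^{\alpha}}M$ is open requires modifying the hull), and you assert it without proof. The paper avoids the issue entirely: it applies $(*)$ to the ordinary closures $\overline{f(P\cap S)}$, for which $x\in\overline{f^{-1}(\overline{f(P\cap S)})}$ is equally routine, and then uses $\overline{A}\subseteq[A]_{\theta^{\alpha}}$ to conclude that the intersection of the $\theta^{\alpha}$-closures is non-empty. You should either make that substitution or add a short lemma proving that $cl_{\theta^{\alpha}}M$ is closed; with that repair your proof is complete.
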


\begin{proof} Note that $x\in\bigcap \{\overline{P\bigcap S} :
P\in \mathcal{N}(x)\}$. By condition $(*)$,  $\bigcap
\{\overline{f(P\bigcap S)} : P\in \mathcal{N}(x)\}\neq \emptyset$
and, hence, $T=\bigcap \{[f(P\bigcap S)]_{\theta^{\alpha}} : P\in
\mathcal{N}(x)\}\neq \emptyset$. Let $y\in T$ and $z\neq y$. By
$U(\alpha)$-separateness of $Y$, there are $\alpha$-hulls $O(y)$
and $O(z)$ such that $\overline{O(y)}\bigcap
\overline{O(z)}=\emptyset$. Then
$\overline{f^{-1}(\overline{O(y)})}\bigcap
\overline{f^{-1}(\overline{O(z)})}=\emptyset$. Note, that $x\in
\overline{f^{-1}(\overline{O(y)})}$. Then there is $W\in
\mathcal{N}(x)$ such that $W\bigcap
\overline{f^{-1}(\overline{O(z)})}=\emptyset$. It follows that
$f(W\bigcap S)\bigcap \overline{O(z)}=\emptyset$ and $z\notin
[f(W\bigcap S)]_{\theta^{\alpha}}$.

\end{proof}

$\bullet $ Condition $(*_{\alpha})$: for each an open set $V$ of
$Y$ the set $X_{\theta^{\alpha}}(f^{-1}(V))$ is open set of $X$.

\bigskip

\begin{theorem}
 Let $X$ be a topological space, $Y$ be a $U(\alpha)$-space, $S$ be a dense subset of $X$, $f$ be a
continuous map from $S$ into $Y$, then  the following are
equivalent:

\begin{enumerate}

\item $f$ to have a continuous extension to $X$;

\item  conditions $(*)$ and $(*_{\alpha})$ holds.

\end{enumerate}

\end{theorem}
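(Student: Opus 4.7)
The plan is to prove both implications by matching the set $X_{\theta^{\alpha}}(f^{-1}(V))$ with the preimage $F^{-1}(V)$ of a proposed extension $F$, leveraging the singleton property established in Proposition~\ref{pr1}.

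For $(1)\Rightarrow(2)$: condition $(*)$ is already Proposition~\ref{pr11}. Let $F:X\to Y$ be a continuous extension of $f$. For any $x\in X$ and any $P\in\mathcal{N}(x)$, density of $S$ gives $P\subseteq\overline{P\cap S}$, so continuity of $F$ yields $F(x)\in\overline{F(P\cap S)}=\overline{f(P\cap S)}\subseteq [f(P\cap S)]_{\theta^{\alpha}}$; the last inclusion is immediate since every $\alpha$-hull of a point is in particular an open neighborhood of it. Thus $F(x)$ lies in $\bigcap\{[f(P\cap S)]_{\theta^{\alpha}}:P\in\mathcal{N}(x)\}$, which by Proposition~\ref{pr1} equals $\{F(x)\}$. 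Hence for any open $V\subseteq Y$, $x\in X_{\theta^{\alpha}}(f^{-1}(V))$ iff $\{F(x)\}\subseteq V$ iff $x\in F^{-1}(V)$, so $X_{\theta^{\alpha}}(f^{-1}(V))=F^{-1}(V)$ is open. This yields $(*_{\alpha})$.

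For $(2)\Rightarrow(1)$: by Proposition~\ref{pr1}, condition $(*)$ ensures that $\bigcap\{[f(P\cap S)]_{\theta^{\alpha}}:P\in\mathcal{N}(x)\}$ is a singleton for every $x\in X$; define $F(x)$ to be its unique element. For $x\in S$ and each $P\in\mathcal{N}(x)$ we have $x\in P\cap S$, so $f(x)\in f(P\cap S)\subseteq [f(P\cap S)]_{\theta^{\alpha}}$; therefore $f(x)$ belongs to the intersection and equals $F(x)$, i.e., $F$ extends $f$. The same chain of equivalences as above, read in reverse, gives $F^{-1}(V)=X_{\theta^{\alpha}}(f^{-1}(V))$ for every open $V\subseteq Y$, which is open by $(*_{\alpha})$. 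Hence $F$ is continuous.

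The heavy lifting is done by Proposition~\ref{pr1}, where $U(\alpha)$-separation supplies uniqueness and $(*)$ supplies non-emptiness. Once the singleton property is available, the identification $F^{-1}(V)=X_{\theta^{\alpha}}(f^{-1}(V))$ is purely formal, and condition $(*_{\alpha})$ is precisely the requirement that $F$ be continuous. The only subtlety worth double-checking is the inclusion $\overline{M}\subseteq [M]_{\theta^{\alpha}}$, which follows directly from the definition of the $\theta^{\alpha}$-closure; beyond this, no serious obstacle arises.
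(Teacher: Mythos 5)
Your proof is correct and follows essentially the same route as the paper: define (or identify) the extension $F$ via the singleton $\bigcap\{[f(P\cap S)]_{\theta^{\alpha}}:P\in\mathcal{N}(x)\}$ and show $F^{-1}(V)=X_{\theta^{\alpha}}(f^{-1}(V))$ for open $V$, so that $(*_{\alpha})$ is exactly continuity of $F$. The only (harmless) variation is in $(1)\Rightarrow(2)$, where the paper argues via convergence of the filter base $\{f(P\cap S)\}$ and uniqueness of its $\theta^{\alpha}$-adherent point, while you get $F(x)$ into the intersection directly from $P\subseteq\overline{P\cap S}$, continuity, and $\overline{M}\subseteq[M]_{\theta^{\alpha}}$, then invoke Proposition~\ref{pr1} (legitimately, since $(*)$ holds by Proposition~\ref{pr11}).
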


\begin{proof}
$(1)\Rightarrow(2)$. From Proposition \ref{pr11}, we obtain
condition $(*)$. Let $f$ to have a continuous extension $F$ to $X$
and let $V$ be an open set of $Y$. We prove that
$X_{\theta^{\alpha}}(f^{-1}(V))=F^{-1}(V)$.

Let $x\in X_{\theta^{\alpha}}(f^{-1}(V))$. Since $F$ is a
continuous map and the filter base $\{P\bigcap S: P\in
\mathcal{N}(x)\}$ converges to a point $x$, it follows that the
filter base $\mathcal F=\{f(P\bigcap S): P\in \mathcal{N}(x)\}$
converges to a point $F(x)$. By $U(\alpha)$-separateness of $Y$,
$ad_{\theta^{\alpha}}\mathcal F=F(x)$, hence,  $F(x)\in V$ and
$x\in F^{-1}(V)$.

Let $x\in F^{-1}(V)$. Then $F(x)\in V$. By unique of adherent
point of the filter base $\mathcal F=\{f(P\bigcap S): P\in
\mathcal{N}(x)\}$, we have $ad_{\theta^{\alpha}}\mathcal F=F(x)\in
V$ and $x\in X_{\theta^{\alpha}}(f^{-1}(V))$.

$(2)\Rightarrow(1)$. For each point $x\in X$ consider
$F(x)=\bigcap \{[f(P\bigcap S)]_{\theta^{\alpha}} : P\in
\mathcal{N}(x)\}$. By  proposition  ~\ref{pr1}, $F(x)$ is an
unique point of $Y$. We have the map $F: X \mapsto Y$. Note that
if $x\in S$ then $F(x)=f(x)$. Clearly that $x\in P\bigcap S$ for
any $P\in \mathcal{N}(x)$, so $f(x)\in \bigcap \{f(P\bigcap S) :
P\in \mathcal{N}(x)\}\subseteq \bigcap \{[f(P\bigcap
S)]_{\theta^{\alpha}} : P\in \mathcal{N}(x)\}=F(x)$. We have that
$F$ is an extension $f$ on $X$. We claim that $F$ is a continuous
extension on $X$. Let $V$ be an open set of $Y$. By condition
$(*_{\alpha})$, $X_{\theta^{\alpha}}(f^{-1}(V))$ is open set of
$X$. It remains to prove that
$X_{\theta^{\alpha}}(f^{-1}(V))=F^{-1}(V)$.

Let $x\in X_{\theta^{\alpha}}(f^{-1}(V))$. Then, by condition
$(*_{\alpha})$, $F(x)=\bigcap \{[f(P\bigcap S)]_{\theta^{\alpha}}
: P\in \mathcal{N}(x)\}\subseteq V$, and $x\in F^{-1}(V)$.

Let $x\in F^{-1}(V)$. Then $F(x)\in V$ and $\bigcap \{[f(P\bigcap
S)]_{\theta^{\alpha}} : P\in \mathcal{N}(x)\}=F(x)\in V$, thus the
point $x$ is $X_{\theta^{\alpha}}$-interior point of $f^{-1}(V)$,
hence, $x\in X_{\theta^{\alpha}}(f^{-1}(V))$. It follow that
$X_{\theta^{\alpha}}(f^{-1}(V))=F^{-1}(V)$.

\end{proof}

\begin{corollary}

 Let $X$ be a topological space, $Y$ be a  Urysohn space, $S$ be a dense subset of $X$, $f$ be a
continuous map from $S$ into $Y$, then  the following are
equivalent:

\begin{enumerate}

\item $f$ to have a continuous extension to $X$;

\item for each an open set $V$ of $Y$ the set $X(f^{-1}(V))$ is
open set of $X$ and condition $(*)$ holds.

\end{enumerate}

\end{corollary}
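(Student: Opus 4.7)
The plan is to obtain this corollary as the special case $\alpha=1$ of the preceding theorem. The key step is to recognize that a Urysohn space is exactly a $U(1)$-space. By the remark following the definition of an $\alpha$-hull, a $1$-hull of a point reduces to an arbitrary open neighborhood of that point; hence the $U(1)$-condition---that any two distinct points admit $1$-hulls whose closures are disjoint---is precisely the Urysohn axiom.

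Having made that identification, I would align the notation of the corollary with that of the theorem. Because $cl_{\theta^1}=cl_{\theta}$, the set $X_{\theta^1}(f^{-1}(V))$ appearing in the definition of an $X_{\theta^{\alpha}}$-interior point coincides with what the corollary denotes $X(f^{-1}(V))$. Under this identification, condition $(*_1)$ of the theorem says exactly that $X(f^{-1}(V))$ is open in $X$ for every open $V\subseteq Y$, while condition $(*)$ is stated unchanged. Applying the theorem with $\alpha=1$ then delivers the claimed equivalence $(1)\Leftrightarrow(2)$.

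I do not anticipate a substantive obstacle. The only point to verify with some care is the coincidence of $U(1)$-spaces with Urysohn spaces under the paper's conventions; once that is observed, the corollary is a pure specialization of the main theorem with no new topological content.
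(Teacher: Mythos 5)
Your proposal is exactly how the paper treats this statement: the corollary is stated as the case $\alpha=1$ of the preceding theorem, using that a $1$-hull is just an open neighbourhood, so $U(1)$-spaces are precisely Urysohn spaces and $X(f^{-1}(V))$ is $X_{\theta^{1}}(f^{-1}(V))$, making condition $(*_{1})$ the openness requirement in item (2). No gap; your identification of $U(1)$ with the Urysohn axiom is the only point needing verification, and it holds under the paper's conventions.
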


\begin{corollary}

 Let $X$ be a topological space, $Y$ be a  functionally Hausdorff space, $S$ be a dense subset of $X$, $f$ be a
continuous map from $S$ into $Y$, then  the following are
equivalent:

\begin{enumerate}

\item $f$ to have a continuous extension to $X$;

\item conditions $(*)$ and $(*_{\omega})$ holds.

\end{enumerate}

\end{corollary}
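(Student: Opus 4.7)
The plan is to derive this corollary as an immediate specialization of the preceding theorem to the ordinal $\alpha = \omega$. The only hypothesis of that theorem that needs verification is that the target space $Y$ is a $U(\alpha)$-space; here this means $U(\omega)$. But the paper has already recorded, in the remark following the definition of $U(\alpha)$-space, that every functionally Hausdorff space is a $U(\omega)$-space. So invoking the theorem with $\alpha=\omega$ yields the equivalence of $(1)$ with ``conditions $(*)$ and $(*_\omega)$ hold,'' which is exactly the statement to be proved.

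The only conceptual content, then, is the claim that functionally Hausdorff $\Rightarrow U(\omega)$. Although this is treated as a remark in the paper, I would briefly justify it to make the deduction self-contained. Given two distinct points $x,y\in Y$, choose a continuous $g\colon Y\to[0,1]$ with $g(x)=0$ and $g(y)=1$. Pick an increasing sequence of rationals $r_0<r_1<\cdots$ in $(0,1/3)$ converging to $1/3$, and set $U_n^x = g^{-1}([0,r_n))$. Then $\overline{U_n^x}\subseteq g^{-1}([0,r_n])\subseteq U_{n+1}^x$, so $U_\omega^x=\bigcup_n U_n^x$ is an $\omega$-hull of $x$ sitting inside $g^{-1}([0,1/3])$. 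Doing the symmetric construction on the $y$ side with rationals increasing to $1/3$ inside $(2/3,1]$ (or equivalently applied to $1-g$) gives an $\omega$-hull $U_\omega^y$ of $y$ contained in $g^{-1}([2/3,1])$, and these two $\omega$-hulls have disjoint closures.

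With this in hand the corollary follows with no further work. The only step that is not mechanical is the verification of the $U(\omega)$ property, and even that reduces to the standard interpolation trick for Urysohn-type separation; there is no real obstacle.
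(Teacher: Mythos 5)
Your proposal is correct and matches the paper's route exactly: the corollary is meant as an immediate specialization of the preceding theorem to $\alpha=\omega$, using the remark that a functionally Hausdorff space is a $U(\omega)$-space. Your explicit interpolation argument ($U_n^x=g^{-1}([0,r_n))$, with the symmetric construction via $1-g$ at $y$) correctly supplies the justification of that remark, which the paper states without proof.
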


\begin{proposition}\label{pr2}
Let $Y$ be a regular-$U(\alpha)$-space. Then $(*)$ implies
$(*_{\alpha})$.

\end{proposition}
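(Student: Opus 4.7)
The plan is to show that every point $x\in X_{\theta^{\alpha}}(f^{-1}(V))$ admits an open neighborhood contained in $X_{\theta^{\alpha}}(f^{-1}(V))$, by combining the regular-$U(\alpha)$ separation of $F(x)$ from the closed complement $Y\setminus V$ with the Taimanov-style consequence of $(*)$ for disjoint closed sets.

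First I would fix $x\in X_{\theta^{\alpha}}(f^{-1}(V))$ and apply Proposition \ref{pr1} (noting that a regular-$U(\alpha)$-space is in particular a $U(\alpha)$-space) to obtain a unique point $p\in Y$ with $\bigcap\{[f(P\cap S)]_{\theta^{\alpha}}:P\in\mathcal N(x)\}=\{p\}\subseteq V$. Since $p\notin Y\setminus V$ and $Y\setminus V$ is closed, the regular-$U(\alpha)$ axiom provides $\alpha$-hulls $U_{p}$ of $p$ and $U^{*}$ of $Y\setminus V$ with $\overline{U_{p}}\cap\overline{U^{*}}=\emptyset$.

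Next I would apply condition $(*)$ to the pair $\{\overline{U_{p}},\overline{U^{*}}\}$ of disjoint closed subsets of $Y$ to conclude $\overline{f^{-1}(\overline{U_{p}})}\cap\overline{f^{-1}(\overline{U^{*}})}=\emptyset$. The point $x$ lies in the first of these closures, since $p\in[f(P\cap S)]_{\theta^{\alpha}}$ for every $P\in\mathcal N(x)$ gives, via the $\alpha$-hull $U_{p}$ of $p$, that $\overline{U_{p}}\cap f(P\cap S)\neq\emptyset$, i.e.\ $P$ meets $f^{-1}(\overline{U_{p}})$. Therefore $x\notin\overline{f^{-1}(\overline{U^{*}})}$, and there exists an open neighborhood $W$ of $x$ with $W\cap f^{-1}(\overline{U^{*}})=\emptyset$.

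Finally I would verify $W\subseteq X_{\theta^{\alpha}}(f^{-1}(V))$. Pick $x'\in W$ and suppose for contradiction that some $q\in\bigcap\{[f(P'\cap S)]_{\theta^{\alpha}}:P'\in\mathcal N(x')\}$ lies in $Y\setminus V$. Because $U^{*}$ is an $\alpha$-hull of the entire set $Y\setminus V$, the defining chain of neighborhoods is simultaneously an $\alpha$-hull of the individual point $q$; hence $q\in[f(W\cap S)]_{\theta^{\alpha}}$ forces $\overline{U^{*}}\cap f(W\cap S)\neq\emptyset$, contradicting $W\cap f^{-1}(\overline{U^{*}})=\emptyset$. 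Thus $q\in V$, as required. The step I expect to be the main obstacle is this last one, specifically the observation that an $\alpha$-hull of a closed set serves as an $\alpha$-hull of each of its points; once this transfer is in hand, the proposition reduces to the disjoint-closed-sets version of $(*)$.
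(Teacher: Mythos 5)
Your proof is correct, and it follows the same overall strategy as the paper (reduce to the unique $\theta^{\alpha}$-limit point via Proposition~\ref{pr1}, separate it from $Y\setminus V$ by $\alpha$-hulls with disjoint closures, pull the separation back to $X$ via $(*)$), but the pull-back step is executed differently. The paper applies $(*)$ to the infinite family $\gamma=\{[f(P\cap S)\cap (Y\setminus\overline{W})]_{\theta^{\alpha}}:P\in\mathcal N(x)\}$, argues $\bigcap\gamma=\emptyset$, and then extracts a neighborhood $Q=U\cap P$ with $f(Q\cap S)\subseteq\overline{W}$; you instead invoke only the two-element (Taimanov-type) instance of $(*)$ for the disjoint closed sets $\overline{U_p}$ and $\overline{U^*}$, getting $x\notin\overline{f^{-1}(\overline{U^*})}$ directly. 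Your final verification via the observation that an $\alpha$-hull of $Y\setminus V$ is an $\alpha$-hull of each of its points is sound (every neighborhood of the set is a neighborhood of the point, and the defining chain is unchanged), and it is in fact the ingredient the paper's write-up leaves implicit: the paper's asserted inclusion $[f(Q\cap S)]_{\theta^{\alpha}}\subseteq\overline{W}$ is not literally immediate, and the honest justification that the limit sets of points of $Q$ avoid $Y\setminus V$ uses $\overline{H}\cap\overline{W}=\emptyset$ together with exactly your point-transfer of the hull $H$. So your route is slightly more economical and patches a terse spot. One shared caveat: your parenthetical claim that a regular-$U(\alpha)$-space is a $U(\alpha)$-space needs a $T_1$-type hypothesis (the paper makes the same tacit leap when citing Proposition~\ref{pr1}); note, however, that you only use nonemptiness of $\bigcap\{[f(P\cap S)]_{\theta^{\alpha}}:P\in\mathcal N(x)\}$, which follows from $(*)$ alone, together with the hypothesis that this intersection lies in $V$, so uniqueness (and hence $U(\alpha)$-separation) can be dispensed with entirely in your argument.
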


\begin{proof} Let $S$ be a dense subset of $X$, $f: S\mapsto Y$ be a continuous function and condition $(*)$ holds.
We prove that a set $X_{\theta^{\alpha}}(f^{-1}(V))$ is open set
of $X$ for an open set $V$ of $Y$.  Let $x\in
X_{\theta^{\alpha}}(f^{-1}(V))$. By proposition ~\ref{pr1},
$F(x)=\bigcap \{[f(P\bigcap S)]_{\theta^{\alpha}} : P\in
\mathcal{N}(x)\}$ is an unique point of $Y$. As $Y$ is a
regular-$U(\alpha)$-space there is $\alpha$-hull $W$ of point
$F(x)$ and $\alpha$-hull $H$ of set $X\setminus V$
 such that $\overline{W}\bigcap \overline{H}=\emptyset$. Let
$\gamma=\{TP=[f(P\bigcap S)\bigcap (X\setminus
\overline{W})]_{\theta^{\alpha}} : P\in \mathcal{N}(x)\}$. Then
$\bigcap \gamma=\emptyset$ and $\bigcap \{f^{-1}(TP): P\in
\mathcal{N}(x)\}=\emptyset$. There is a neighborhood $U$ of $x$
such that $U\bigcap f^{-1}(TP)=\emptyset$ for some $P\in
\mathcal{N}(x)$. Let $Q=U\bigcap P$. Then $[f(P\bigcap
S)]_{\theta^{\alpha}}\subseteq \overline{W}$. It follows that
$Q\subseteq X_{\theta^{\alpha}}(f^{-1}(V))$ and, hence, the set
$X_{\theta^{\alpha}}(f^{-1}(V))$ is an open set of $X$.

\end{proof}

\begin{theorem}\label{th15}
 Let $X$ be a topological space, $Y$ be a regular-$U(\alpha)$-space, $S$ be a dense subset of $X$, $f$ be a
continuous map from $S$ into $Y$, then  the following are
equivalent:

\begin{enumerate}

\item $f$ to have a continuous extension to $X$;

\item  conditions $(*)$ holds.

\end{enumerate}

\end{theorem}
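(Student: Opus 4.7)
The plan is to deduce this theorem as an immediate corollary of the preceding theorem (the $U(\alpha)$ version) together with Proposition~\ref{pr2}. The content is essentially packaged in those earlier results, so I expect the proof to be very short and to require almost no new technical work.

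For the implication $(1)\Rightarrow(2)$, I would simply appeal to Proposition~\ref{pr11}. That proposition establishes that whenever $f$ admits a continuous extension to $X$, condition $(*)$ holds, and it does so without any separation hypothesis on the image space $Y$. So the hard direction of the equivalence is the reverse one.

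For $(2)\Rightarrow(1)$, I would first observe that a regular-$U(\alpha)$-space is in particular a $U(\alpha)$-space: given two distinct points $x$ and $y$, the singleton $\{y\}$ plays the role of the closed set $F$ in Definition~3 (using the implicit $T_1$-type standing hypothesis, so that $\{y\}$ is closed), producing disjoint $\alpha$-hulls whose closures separate $x$ and $y$. Hence the preceding theorem characterizing continuous extensions into $U(\alpha)$-spaces applies, and shows that an extension exists if and only if both $(*)$ and $(*_\alpha)$ are satisfied. Condition $(*)$ is given by hypothesis, while Proposition~\ref{pr2} asserts exactly that, for $Y$ a regular-$U(\alpha)$-space, $(*)$ already implies $(*_\alpha)$. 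Combining these, both conditions hold, so the preceding theorem produces the desired continuous extension $F:X\to Y$.

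The main obstacle is essentially absent — the result is a formal consequence of two earlier statements. The only step that requires a moment's care is the verification that regular-$U(\alpha)$ implies $U(\alpha)$, which is needed so that the previous theorem can be invoked; this hinges on points being closed, as is implicit in the paper's setting. Once that is in place, no additional construction or estimate is required.
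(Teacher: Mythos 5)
Your proposal is correct and is essentially the paper's own (implicit) argument: the paper places Proposition~\ref{pr2} immediately before Theorem~\ref{th15} precisely so that $(2)\Rightarrow(1)$ follows from the $U(\alpha)$-theorem via $(*)\Rightarrow(*_{\alpha})$, with $(1)\Rightarrow(2)$ given by Proposition~\ref{pr11}. Your explicit remark that regular-$U(\alpha)$ yields $U(\alpha)$ only under the tacit assumption that points are closed is a fair flagging of a hypothesis the paper also uses silently (Proposition~\ref{pr2} itself invokes Proposition~\ref{pr1}, which needs $U(\alpha)$).
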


\section{$\theta_{\alpha}$-continuous extension}

Recall that a function $f: X \mapsto Y $  be called
$\theta$-continuous if for a point $x\in X$ and a neighborhood $U$
of $f(x)$ there is a neighborhood $W$ of $x$ such that
$f(W)\subseteq \overline{U}$.

\begin{definition} A function $f: X \mapsto Y $ will be called
$\theta_{\alpha}$-continuous if for a point $x\in X$ and a
$\alpha$-hull $U$ of $f(x)$ there is a neighborhood $W$ of $x$
such that $f(W)\subseteq \overline{U}$.
\end{definition}

\bigskip
For $\alpha=1$, we have that $\theta_{1}$-continuous function is
$\theta$-continuous function.

 Clearly, that a continuous function is a
$\theta_{\alpha}$-continuous function for any $\alpha>0$.
Moreover, it is easy to check that $\theta_{\beta}$-continuous
function is $\theta_{\alpha}$-continuous function for
$\beta<\alpha$.
\bigskip

$\bullet $ Condition $(+)_{\alpha}$: a family $\{A_{\beta}\}$ of
subsets of $Y$ such that $\bigcap_{\beta}
[A_{\beta}]_{\theta^{\alpha}}=\emptyset$ implies $\bigcap_{\beta}
\overline{f^{-1}(A_{\beta})}=\emptyset$.

\bigskip

$\bullet$ Condition $(++)_{\alpha}$: for each $\alpha$-hull
$W=\bigcup_{\beta\leq\alpha} U_{\beta}$ of a point $y\in Y$ there
is an open set $V$ of $X$ such that
$X_{\theta^{\alpha}}(f^{-1}(U_1))\subseteq V \subseteq
X_{\theta^{\alpha}}(f^{-1}(\overline{W}))$.

\bigskip

\begin{theorem}
 Let $X$ be a topological space, $Y$ be a $U(\alpha)$-space, $S$ be a dense subset of $X$, $f$ be a
$\theta_{\alpha}$-continuous map from $S$ into $Y$, then  the
following are equivalent:

\begin{enumerate}

\item $f$ to have a $\theta_{\alpha}$-continuous extension to $X$;

\item  conditions $(+)_{\alpha}$ and $(++)_{\alpha}$ holds.

\end{enumerate}

\end{theorem}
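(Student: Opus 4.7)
My plan is to adapt the proof of the continuous-extension theorem proved above, with $\theta_\alpha$-continuity playing the role of continuity and $(+)_\alpha$, $(++)_\alpha$ playing the roles of $(*)$ and $(*_\alpha)$. As a preliminary, the analogue of Proposition~\ref{pr1} should continue to hold: under $(+)_\alpha$ and the $U(\alpha)$-separateness of $Y$ the intersection
\[
F(x):=\bigcap\bigl\{[f(P\cap S)]_{\theta^\alpha}:P\in\mathcal{N}(x)\bigr\}
\]
is a single point. Non-emptiness is the contrapositive of $(+)_\alpha$ applied to the family $\{f(P\cap S):P\in\mathcal{N}(x)\}$, since density of $S$ gives $x\in\overline{P\cap S}\subseteq\overline{f^{-1}(f(P\cap S))}$ for every $P$; uniqueness reproduces the argument of Proposition~\ref{pr1} with the $\alpha$-hulls furnished by $U(\alpha)$-separation, using $(+)_\alpha$ in place of $(*)$ on the closed sets $\overline{O(y)},\overline{O(z)}$. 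Once this is in hand, the identification $X_{\theta^\alpha}(f^{-1}(A))=F^{-1}(A)$ holds for every $A\subseteq Y$.

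For $(2)\Rightarrow(1)$ I would define $F$ by the displayed formula (which extends $f$ since $f(x)\in f(P\cap S)\subseteq[f(P\cap S)]_{\theta^\alpha}$ for $x\in S$) and verify $\theta_\alpha$-continuity at $x\in X$ as follows. Fix an $\alpha$-hull $U=U_\alpha$ of $F(x)$ with nested sequence $U_0\subseteq\overline{U_0}\subseteq U_1\subseteq\cdots\subseteq U_\alpha$, and apply $(++)_\alpha$ with $W=U$ to obtain an open set $V$ satisfying $X_{\theta^\alpha}(f^{-1}(U_1))\subseteq V\subseteq X_{\theta^\alpha}(f^{-1}(\overline{U}))$. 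Since $F(x)\in U_0\subseteq U_1$, the point $x$ lies in the left-hand side, hence in $V$; and for every $z\in V$ the identification above forces $F(z)\in\overline{U}$, so $F(V)\subseteq\overline{U}$ and $V$ is the required neighborhood of $x$.

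For $(1)\Rightarrow(2)$, assume $F$ is a $\theta_\alpha$-continuous extension. Condition $(+)_\alpha$ comes out in the style of Proposition~\ref{pr11}: given $\{A_\beta\}$ with $\bigcap[A_\beta]_{\theta^\alpha}=\emptyset$ and $x\in X$, pick $\beta$ with $F(x)\notin[A_\beta]_{\theta^\alpha}$, an $\alpha$-hull $U$ of $F(x)$ with $\overline{U}\cap A_\beta=\emptyset$, and by $\theta_\alpha$-continuity of $F$ a neighborhood $G$ of $x$ with $F(G)\subseteq\overline{U}$, whence $G\cap f^{-1}(A_\beta)=\emptyset$ and $x\notin\overline{f^{-1}(A_\beta)}$. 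For $(++)_\alpha$ I would first rerun the single-point argument using $\theta_\alpha$-continuity of $F$ to verify that $F(x)\in[f(P\cap S)]_{\theta^\alpha}$ for every $P$ (so that once again $X_{\theta^\alpha}(f^{-1}(A))=F^{-1}(A)$), and then take $V=\mathrm{Int}\bigl(F^{-1}(\overline{W})\bigr)$. The inclusion $V\subseteq X_{\theta^\alpha}(f^{-1}(\overline{W}))$ is immediate, and the remaining inclusion $F^{-1}(U_1)\subseteq V$ reduces to producing, for each $x$ with $F(x)\in U_1$, a neighborhood $F$-mapped into $\overline{W}$, obtained by applying $\theta_\alpha$-continuity at $x$ to an $\alpha$-hull of $F(x)$ built from the tail $U_1\subseteq U_2\subseteq\cdots\subseteq U_\alpha$ of the original hull of $y$.

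The main obstacle is this last construction --- reassembling the tail $(U_\beta)_{\beta\ge1}$ into a bona fide $\alpha$-hull of $F(x)$ whose closure is contained in $\overline{W}$. For limit ordinals $\alpha$ the shift $V_\beta:=U_{\beta+1}$ ($\beta<\alpha$) together with $V_\alpha:=\bigcup_{\beta<\alpha}V_\beta=U_\alpha=W$ inherits the nesting $\overline{V_\beta}\subseteq V_{\beta+1}$ from the original chain and is automatically an $\alpha$-hull of $F(x)$ inside $W$. For successor ordinals $\alpha=\gamma+1$ the shifted chain has only $\alpha$ terms, so one has to adjoin a further open set on top while keeping the closure inside $\overline{W}$; this is where the $U(\alpha)$-separateness of $Y$ must be brought in more delicately, and carrying out this ordinal bookkeeping uniformly in $\alpha$ is the technical crux of the proof.
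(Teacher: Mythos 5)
Your overall architecture matches the paper's proof (define $F(x)=\bigcap\{[f(P\cap S)]_{\theta^{\alpha}}:P\in\mathcal{N}(x)\}$, show it is a singleton, get $\theta_{\alpha}$-continuity from $(++)_{\alpha}$, and in the converse direction argue as in Proposition~\ref{pr11}), but there is a genuine gap at the central step of $(2)\Rightarrow(1)$: the uniqueness of $F(x)$. You claim it ``reproduces the argument of Proposition~\ref{pr1}, using $(+)_{\alpha}$ in place of $(*)$ on the closed sets $\overline{O(y)},\overline{O(z)}$.'' But the hypothesis of $(+)_{\alpha}$ is $\bigcap_{\beta}[A_{\beta}]_{\theta^{\alpha}}=\emptyset$, not $\bigcap_{\beta}A_{\beta}=\emptyset$; from $\overline{O(y)}\cap\overline{O(z)}=\emptyset$ one cannot conclude $[\overline{O(y)}]_{\theta^{\alpha}}\cap[\overline{O(z)}]_{\theta^{\alpha}}=\emptyset$ in a $U(\alpha)$-space, which need not be regular, so $(+)_{\alpha}$ simply does not apply to this two-element family and your argument that $z\notin F(x)$ collapses. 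This is precisely where the paper's proof diverges from Proposition~\ref{pr1}: assuming $y,z\in F(x)$, it forms the family $\sigma$ of all sets $f(P\cap S)\cap\overline{P(y)}$ and $f(P\cap S)\cap\overline{P(z)}$ with $P\in\mathcal{N}(x)$ and $P(y)\in\mathcal{N}_{\theta^{\alpha}}(y)$, $P(z)\in\mathcal{N}_{\theta^{\alpha}}(z)$, shows $\bigcap\{[B]_{\theta^{\alpha}}:B\in\sigma\}=\emptyset$ by $U(\alpha)$-separating every $q\in Y$ from $y$ or $z$, applies $(+)_{\alpha}$ to $\sigma$ to get $C\in\sigma$ with $x\notin\overline{f^{-1}(C)}$, notes that $x\in\overline{f^{-1}(B)}$ for all $B$ built from hulls of $y$ (since $y\in F(x)$), so $C=f(P\cap S)\cap\overline{P(z)}$, and extracts a neighborhood $Q\subseteq P$ of $x$ with $f(Q\cap S)\cap\overline{P(z)}=\emptyset$, contradicting $z\in F(x)$. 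Without an argument of this kind your map $F$ is not even well defined, so this missing idea is the heart of the proof, not a detail.

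By contrast, the obstacle you single out as ``the technical crux'' in $(1)\Rightarrow(2)$ for $(++)_{\alpha}$ --- rebuilding an $\alpha$-hull of $F(x)$ from the tail of the chain, with the successor-ordinal case unresolved --- is not actually an obstacle. Since $\mathrm{cl}\,U_{\beta}\subseteq U_{\beta+1}$ makes the chain increasing, $F(x)\in U_{1}$ already implies that every $U_{\beta}$ is a neighborhood of $F(x)$, so the same indexed family $\{U_{\beta}\}_{\beta\le\alpha}$ witnesses that $W=U_{\alpha}$ itself is an $\alpha$-hull of $F(x)$; no shifting, re-indexing, or extra top layer is needed, and $\theta_{\alpha}$-continuity of $F$ at $x$ immediately yields a neighborhood $V_{x}$ with $F(V_{x})\subseteq\overline{W}$. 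This is exactly the paper's step, after which $V=\bigcup\{V_{x}\}$ (essentially your $\mathrm{Int}\,F^{-1}(\overline{W})$) finishes $(++)_{\alpha}$. So that half of your proposal closes trivially once you use the hull $W$ as it stands, whereas the uniqueness step above is the piece that genuinely needs to be supplied.
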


\begin{proof}
$(1)\Rightarrow(2)$. Let $F$ be a $\theta_{\alpha}$-continuous
extension of $f$, a family $\sigma=\{A_{\beta}\}$ such that
$\bigcap_{\beta} [A_{\beta}]_{\theta^{\alpha}}=\emptyset$, $x\in
X$ and $y=F(x)$. There is $B\in \sigma$ such that $y\notin
[B]_{\theta^{\alpha}}$ and, hence, there is a $\alpha$-hull $W$ of
$y$ such that $\overline{W}\bigcap B=\emptyset$. There exists a
neighborhood $V$ of $x$ such that $F(V)\subseteq \overline{W}$.
Since $\overline{W}\bigcap B=\emptyset$, we get $V\bigcap
f^{-1}(B)=\emptyset$, and, hence, $x\notin \overline{f^{-1}(B)}$.
It follows that $\bigcap \{\overline{f^{-1}(B)}: B\in \sigma
\}=\emptyset$. So we have condition $(+)_{\alpha}$ holds.

 Let $W=\bigcup_{\beta\leq\alpha} U_{\beta}$ (where $clU_{\beta}\subseteq U_{\beta+1}$ for any  $\beta+1\leq\alpha$ and
$W=U_{\alpha}=\bigcup_{\beta\leq\alpha} U_{\beta}$) be an
$\alpha$-hull of some point of $Y$ and $x\in X$ such that

$x\in X_{\theta^{\alpha}}(f^{-1}(U_1))$. Since $x\in \bigcap
\{\overline{P\bigcap S} : P\in \mathcal{N}(x)\}$ we get $F(x)\in
\bigcap \{[f(P\bigcap S)]_{\theta^{\alpha}}: P\in
\mathcal{N}(x)\}$. If $a\neq F(x)$ then there are a $\alpha$-hull
$O(a)$ and $O(F(x))$ such that $\overline{O(a)}\bigcap
\overline{O(F(x))}=\emptyset$. Then there is a neighborhood $P$ of
$x$ such that $F(P)\subseteq \overline{O(F(x))}$. It follow that
$a\notin [f(P\bigcap S)]_{\theta^{\alpha}}$ and $\bigcap
\{[f(P\bigcap S)]_{\theta^{\alpha}}: P\in \mathcal{N}(x)\}=F(x)$.
Since $F(x)\in U_1$ (and $W$ is $\alpha$-hull of $F(x)$) there is
a neighborhood $V_x$ of $x$ such that $F(V_x)\subseteq
\overline{W}$. So if $z\in V_x$ then $F(z)\in \overline{W}$ and,
hence, $V_x \subseteq X_{\theta^{\alpha}}(f^{-1}(\overline{W}))$.
Let $V=\bigcup \{V_{x} : x\in X_{\theta^{\alpha}}(f^{-1}(W))\}$.

So we have condition $(++)_{\alpha}$ holds.

$(2)\Rightarrow(1)$. Let $F(x):=\bigcap \{[f(P\bigcap
S)]_{\theta^{\alpha}}: P\in \mathcal{N}(x)\}$. By condition
$(+)_{\alpha}$, $F(x)\neq \emptyset$. We claim that $F(x)$ is an
unique point. Let $y\in F(x)$ and $z\neq y$. Then there are
$\alpha$-hull $O(y)$ and $O(z)$ of points $y$ and $z$ such that
$\overline{O(y)}\bigcap \overline{O(z)}=\emptyset$. Let $P$ be a
neighborhood of $x$. Then $z\notin [f(P\bigcap S)\bigcap
\overline{O(y)}]_{\theta^{\alpha}}$. We claim that $z\notin F(x)$.
On the contrary, let $z\in F(x)$. Then a family
$\gamma=\{f(P\bigcap S)\bigcap \overline{P(z)} : P\in
\mathcal{N}(x), P(z)\in \mathcal{N}_{\theta^{\alpha}}(z)\}$
consists of a non-empty sets and $y\notin [f(P\bigcap S)\bigcap
\overline{O(z)}]_{\theta^{\alpha}}$. Consider a family
$\sigma=\gamma\bigcup \{f(P\bigcap S)\bigcap \overline{P(y)} :
P\in \mathcal{N}(x), P(y)\in \mathcal{N}_{\theta^{\alpha}}(y)\}$.

 We claim that $D=\bigcap \{[B]_{\theta^{\alpha}} : B\in
\sigma\}=\emptyset$. So $y\notin D$ and $z\notin D$. Let $q\in
Y\setminus \{y,z\}$. There are $\alpha$-hull $P(y)$ and $P(q)$ of
points $y$ and $q$ such that $\overline{P(y)}\bigcap
\overline{P(q)}=\emptyset$. Then $q\notin [f(P\bigcap S)\bigcap
\overline{P(y)}]_{\theta^{\alpha}}$ and $q\notin D$. So
$D=\emptyset$.

By condition $(+)_{\alpha}$, $\bigcap \{\overline{f^{-1}(B)} :
B\in \sigma \}=\emptyset$. Hence, there is $C\in \sigma$ such that
$x\notin \overline{f^{-1}(C)}$. Note that $x\in
\overline{f^{-1}(B)}$ for any $B\in (\sigma\setminus\gamma)$ (by
definition of $F(x)$). It follows that $C\in \gamma$, i.e.
$C=f(P\bigcap S)\bigcap \overline{P(z)}$ for some $P\in
\mathcal{N}(x)$ and $P(z)\in \mathcal{N}_{\theta^{\alpha}}(z)$.
There is a neighborhood $Q$ of $x$ such that $Q\bigcap
f^{-1}(C)=\emptyset$ and $Q\subseteq P$. So $f(Q\bigcap S)\bigcap
\overline{P(z)}=\emptyset$ and $z\notin F(x)$.

So we get extension $F$ of the map $f$. We claim that $F$ is a
$\theta_{\alpha}$-continuous extension to $X$.

Let $x\in X$ and $W=\bigcup_{\beta\leq\alpha} U_{\beta}$ be a
$\alpha$-hull of $F(x)$. Then $F(x)=\bigcap \{[f(P\bigcap
S)]_{\theta^{\alpha}}: P\in \mathcal{N}(x)\}$ and $F(x)\in U_1$.
Hence $x\in X_{\theta^{\alpha}}(f^{-1}(U_1))$. By condition
$(++)_{\alpha}$, there is an open set $V$ of $X$ such that
$X_{\theta^{\alpha}}(f^{-1}(U_1))\subseteq V \subseteq
X_{\theta^{\alpha}}(f^{-1}(\overline{W}))$. It follows that
$F(V)\subseteq \overline{W}$.

\end{proof}

Note that for $\alpha=1$ we get

$\bullet $ condition $(+)$: a family $\{A_{\beta}\}$ of subsets of
$Y$ such that $\bigcap_{\beta} [A_{\beta}]_{\theta}=\emptyset$
implies $\bigcap_{\beta} \overline{f^{-1}(A_{\beta})}=\emptyset$.

$\bullet$ condition $(++)$: for each open set $W$ of $X$ there is
an open set $V$ of $X$ such that $X_{\theta}(f^{-1}(W))\subseteq V
\subseteq X_{\theta}(f^{-1}(\overline{W}))$.

\medskip

\begin{corollary}

 Let $X$ be a topological space, $Y$ be a Urysohn space, $S$ be a dense subset of $X$, $f$ be a
$\theta$-continuous map from $S$ into $Y$, then  the following are
equivalent:

\begin{enumerate}

\item $f$ to have a $\theta$-continuous extension to $X$;

\item  conditions $(+)$ and $(++)$ holds.

\end{enumerate}

\end{corollary}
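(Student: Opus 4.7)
The plan is to deduce this corollary as the specialization $\alpha=1$ of the theorem immediately preceding it, so the work consists almost entirely of verifying that the hypotheses and conditions translate correctly. By the convention stated right after the definition of $\alpha$-hull, a $1$-hull of a point (or set) is simply an open neighborhood of it. Consequently a $U(1)$-space, in which any two distinct points admit open neighborhoods with disjoint closures, is exactly a Urysohn space. Likewise a $\theta_{1}$-continuous map is by definition a $\theta$-continuous map, and $cl_{\theta^{1}}M=cl_{\theta}M$, so the operators $X_{\theta^{1}}$ and $X_{\theta}$ coincide as well.

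Next I would check that the two auxiliary conditions specialize to those appearing in the corollary. Condition $(+)_{1}$ reads: for any family $\{A_{\beta}\}$ with $\bigcap_{\beta}[A_{\beta}]_{\theta}=\emptyset$ one has $\bigcap_{\beta}\overline{f^{-1}(A_{\beta})}=\emptyset$, which is literally condition $(+)$. For $(++)_{\alpha}$ at $\alpha=1$, an $\alpha$-hull $W=\bigcup_{\beta\leq 1}U_{\beta}$ of a point collapses to an arbitrary open neighborhood $W$ with $U_{1}=W$, so the condition becomes: for every open $W\subseteq Y$ there is an open $V\subseteq X$ with $X_{\theta}(f^{-1}(W))\subseteq V\subseteq X_{\theta}(f^{-1}(\overline{W}))$, which is condition $(++)$ (reading the phrase ``open set $W$ of $X$'' in the statement of $(++)$ as a typographical slip for ``of $Y$'', which is in any case forced by the appearance of $f^{-1}(W)$ and $f^{-1}(\overline{W})$).

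With these translations in hand, the two implications of the corollary are a direct instance of the preceding theorem applied with $\alpha=1$, and no further argument is required. The only possible obstacle is the definitional book-keeping just carried out; once one accepts that a $1$-hull is the same thing as an open set, there is no new substantive content to prove.
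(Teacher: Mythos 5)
Your proposal is correct and matches the paper's (implicit) argument exactly: the paper states the corollary as the case $\alpha=1$ of the preceding theorem, having just recorded that $(+)_{1}$ and $(++)_{1}$ reduce to $(+)$ and $(++)$, and your verification that a $1$-hull is an open set, that $U(1)$-spaces are Urysohn spaces, and that $\theta_{1}$-continuity is $\theta$-continuity is precisely the required book-keeping. Your reading of the ``open set $W$ of $X$'' in $(++)$ as a slip for ``of $Y$'' is also the intended one.
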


\medskip

Note that for a regular-$U(\alpha)$-space $Y$ a
$\theta_{\alpha}$-continuous function is a continuous function and
$cl_{\theta^{\alpha}}M=cl M$. It follows that a condition
$(+)_{\alpha}$ is equivalent to the condition $(*)$ and we get a
Theorem \ref{th15}.

\section{Example}

There is a simple example of a regular space, but it is not
completely regular space (see~{\cite{my}}).

\begin{example}(Mysior) Let $M_0$ be the subset of the plane defined by
the condition $y\geq 0$, i.e., the closed upper half-plane, let
$z_0$ be the point $(0,-1)$ and let $M=M_0\bigcup \{z_0\}$. Denote
by $L$ the line $y=0$ and by $L_i$ where $i=1,2,...,$ the segment
consisting of all points $(x,0)\in L$ with $i-1\leq x \leq i$. For
each point $z=(x,0)\in L$ denote by $A_1(z)$ the set of all points
$(x,y)\in M_0$, where $0\leq y\leq 2$, by $A_2(z)$ the set of all
points $(x+y,y)\in M_0$, where $0\leq y\leq 2$, and let $B(z)$ be
the family of all sets of the form $(A_1(z)\bigcup
A_2(z))\setminus B$, where $B$ is a finite set such that $z\notin
B$. Furthermore, for each point $z\in M_0\setminus L$ let
$B(z)=\{\{z\}\}$ and, finally, let
$B(z_0)=\{U_i(z_0)\}_{i=1}^{\infty}$, where $U_i(z_0)$ consists of
$z_0$ and all points $(x,y)\in M_0$ with $x\geq i$.

\end{example}

It is well-known that the space $M$ is a regular space, but it is
not a Tychonoff space. Moreover, the space $M$ is not regular
$U(\omega)$-space.

Let $T$ be the space $M$, but a base of the point $z_0$ we define
as $B(z_0)=M\setminus D$, where $D$ a clopen subset of $M$ such
that $z_0\notin D$. Note that the identity map $id: M \mapsto T$
is the Tychonoff functor.

Consider a continuous identity map $f: M_0 \mapsto M_0$ as the map
from a dense subset $M_0$ of the space $T$ into the space $M$.

1. $f$ have not a $\theta_{\alpha}$-continuous extension to $X$
for any $\alpha<\omega$.

Really, the set $L_i$ is a $\theta_{\alpha}$-closed subset of $M$
for any $i\in \mathbb{N}$ and $\alpha<\omega$. Clearly that
$\bigcap L_i=\emptyset$, but $\bigcap_i
\overline{f^{-1}(L_i)}=\{z_0\}$. This contradicts the condition
$(+)_{\alpha}$ for $\alpha<\omega$.

2. $f$ have a $\theta_{\omega}$-continuous extension to $X$.

Let $F=id: T \mapsto M$. Note that for a $\omega$-hull $W$ of
$z_0$ of the space $M$, $L\subseteq W$. Hence, $F^{-1}(W)$ is an
open set of $T$.

\medskip

% {\bf Acknowledgment.}

%This work was supported by Act 211 Government of the Russian
%Federation, contract ¹ 02.A03.21.0006.

%% The Appendices part is started with the command \appendix;
%% appendix sections are then done as normal sections
%% \appendix

%% \section{}
%% \label{}

%% References
%%
%% Following citation commands can be used in the body text:
%% Usage of \cite is as follows:
%%   \cite{key}          ==>>  [#]
%%   \cite[chap. 2]{key} ==>>  [#, chap. 2]
%%   \citet{key}         ==>>  Author [#]

%% References with bibTeX database:

\bibliographystyle{model1a-num-names}
\bibliography{<your-bib-database>}

%% Authors are advised to submit their bibtex database files. They are
%% requested to list a bibtex style file in the manuscript if they do
%% not want to use model1a-num-names.bst.

%% References without bibTeX database:
%%\bibliographystyle{plain}

% \begin{thebibliography}{00}

%% \bibitem must have the following form:
%%   \bibitem{key}...
%%

% \bibitem{}

% \end{thebibliography}

\end{document}